\long\def\symbolfootnote[#1]#2{\begingroup%
\def\thefootnote{\fnsymbol{footnote}}\footnote[#1]{#2}\endgroup}
\newcommand{\GL}{\mathrm{GL}}
\newcommand{\M}{\mathfrak{gl}}
\newcommand{\R}{\mathrm{R}}
\def\imod#1{\allowbreak\mkern10mu({\operator@font mod}\,\,#1)}
\newtheorem{theorem}{Theorem}[section]
\newtheorem*{theorem*}{Theorem}
\theoremstyle{definition}
\newtheorem{remark}[theorem]{Remark}
\numberwithin{equation}{section}
\newcommand{\ignore}[1]{}
\newcommand{\mynote}[1]{}
\begin{document}
\setcounter{section}{0}
\title[Word maps, polynomial maps and image ratios]{Word maps, polynomial maps and image ratios}
\author{Saikat Panja}
\email{panjasaikat300@gmail.com}
\address{Harish-Chandra Research Institute- Main Building, Chhatnag Rd, Jhusi, Uttar Pradesh 211019, India}
\thanks{The author is supported by a PDF-Math fellowship from the Harish-Chandra Research Institute}
\date{\today}
\subjclass[2020]{20G40, 20P05, 16R10, 16S50}
\keywords{word maps, polynomial maps, finite groups, finite rings}
\begin{abstract}
If $A$ is a finite group (or a finite ring) and $\omega$ is a word map (or a polynomial map), we define the quantity $|\omega(A)|/|A|$ as the \emph{image ratio of $\omega$ on $A$} and will be denoted by $\mu(\omega,A)$. In this article, we investigate the set $\mathrm{R}(\omega)=\{\mu(\omega,A) : A \text{ is a finite group}\}$, and also consider the case of rings. Specifically, we demonstrate the existence of word maps (and polynomial maps) whose set of image ratios is dense in $[0,1]$ for groups (and rings).
\end{abstract}
\maketitle
\section{Introduction}\label{sec:introin}
Let $\mathbf{C}$ be the category of finite groups or algebras.
An element $\omega$ from a finitely generated free group or free polynomial ring will be referred to as \emph{word}. 
For $A\in \mathbf{C}$, and a word $\omega$ on $n$ generators, $\omega$ induces a set-theoretic map
\begin{align}\label{eq:1}
    \widetilde{\omega}:A^n\longrightarrow A,
\end{align}
by means of evaluation. The image will be denoted by $\omega(A)$, by abuse of notation. In the case of groups, these are known as \emph{word maps} and in the case of algebras they are known as \emph{polynomial maps}, however, we will stick with the term word map throughout the article.
After the settlement of Ore's conjecture in \cite{LOST10}, there has been a growing interest in the direction of word maps in groups. 
The study of the polynomial maps was revamped after a somewhat positive solution in \cite{KaMaRo12} to a very old conjecture by L'vov and Kaplansky in the case of a quadratically closed field $F$ and $\M_2(F)$, the full matrix ring of $2\times 2$ matrices with entries from $F$. 
These results incorporate results from the theory of algebraic groups, representation theory of groups, \emph{etc.} to handle the problems in group theory whereas results from algebraic geometry, arithmetic properties of polynomial equations in associative algebras \emph{etc.} are being used to obtain results in case of algebras.
An interesting related problem in the theory of word maps (and polynomial maps) is a \emph{Waring-type} problem which asks given a group/algebra and a group $G$. 
There are several results in different directions and of different kinds.
We mention a few of them; in a series of three papers \cite{LiebeckShalev01}, \cite{Shalev09} and \cite{LST11} it was proved that for any finite nonabelian simple group $G$ of sufficiently high order and a word $\omega$, it satisfies $\omega(G)^2=G$. 
After \cite{KaMaRo12} appeared there has been results on $3\times 3$ full matrix ring over algebraically closed fields (see \cite{KaMaRo16}), on upper-triangular matrices in \cite{GargateMello22}, \cite{PanjaPrasad23}, for matrix ring over reals or quaternions in \cite{PaSaSi24Surj} and on more general algebras in \cite{Matej20}, \cite{MatejPeter23}. 
There are several generalizations of these concepts as well, for example, word maps with constants (see \cite{GoKuPlo18}) or polynomial maps with constants (see \cite{PaSaSi23}).
Given that the literature in this area has grown so much, it is not possible to include all of them, hence we suggest the readers to the articles \cite{Shalev13} for the group theoretic aspect and \cite{KaMaRoYa20} for the ring theoretic aspects.
 
When $A$ in \cref{eq:1} is finite, it makes sense to talk about the quantity $|\omega(A)|/|A|$.
For a fixed $A$ and $\omega$, this will be referred to as \emph{image ratio of $\omega$ on $A$} and to be denoted by $\mu(\omega,A)$. 
Clearly this ratio satisfies $\dfrac{1}{|A|}\leq \mu(\omega,A)\leq 1$.
We are interested in studying the set
\begin{align*}
    \mathrm{R}_{\mathbf{C}}(\omega)=\{\mu(\omega,A):A\in\mathbf{C}\}.
\end{align*}
More precisely we are interested in the limit points of the set $\mathrm{R}_{\mathbf{C}}(\omega)$.
Note that this definition can easily be adapted to the category where we have a notion of measure and all objects have finite measure, for example, the category of compact groups. 
In this article when considering algebras we will restrict ourselves to the category of finite $\mathbb{F}_q$-algebras, where $\mathbb{F}_q$ is a finite field of cardinality $q$.
In \cref{sec:finite-groups} we examine a few words and their possible limit points.
Furthermore, we show that if $\mathbf{C}$ is the category of finite groups then there exist power maps corresponding to $M\geq 2$, such that $ \overline{\mathrm{R}_{\mathbf{C}}(\omega)}=[0,1]$, see \cref{thm:main-group}.
Although the case for finite rings will follow from this, we mention them in \cref{sec:ratio-ring} separately. An analogous result to \cref{thm:main-group}, is stated without proof in this section, see \cref{thm:main-ring}.
\section{Image ratios of power maps in finite groups}\label{sec:finite-groups}
Let $F_n$ denote the free group on $n$ generators. If $\omega\in [F_n,F_n]$ is of the form
\begin{align*}
    \omega=\prod\limits_{i=1}^{t}x_{m_i}^{s_i},
\end{align*}
then for each $x_j$ the total power of $x_j$ in $\omega$ is $0$. 
Hence $\mu(\omega, \mathbb{Z}/n\mathbb{Z})=\dfrac{1}{n}$, which tends to $0$ if $n\rightarrow\infty$. This shows that
    if $\omega\in [F_n,F_n]$, then $0\in \overline{\R(\omega)}$.

If we take $\omega\in F_n\setminus [F_n,F_n]$ then there exist integers $a_1,a_2,\ldots,a_n\geq 0$ and $\omega'\in [F_n,F_n]$ such that
$$\omega=x_1^{a_1}x_2^{a_2}\ldots x_n^{a_n}\omega'.$$
If $\mathrm{g.c.d.}(a_1,a_2\ldots, a_n)=a>1$ then $\mu(\omega,(\mathbb{Z}/k\mathbb{Z})^t)=\dfrac{1}{k^t}$, which proves that in this case also $0\in \overline{\R(\omega)}$.
We now work with a special family of words known as Engel words.
These are words of $[F_2,F_2]$, defined inductively as follows;
\begin{align*}
    e_1&=[x,y]=xyx^{-1}y^{-1},\\
    e_i&=[e_{i-1},y],\text{for all }i>1.
\end{align*}
It follows from \cite[Theorem A]{Bandman12} that for $i\in\mathbb{N}$ there exists $q_0(i)$ such that for all prime power $q\geq q_0(i)$ the $i$-th Engel word $e_i$ satisfies $e_i(\mathrm{SL}_2(q))=\mathrm{SL}_2(q)\setminus\{\mathrm{Id}_2\}$. 
Hence we get 
\begin{remark}
For $i\geq 1$ the $i$-th Engel word satisfies $0,1\in\overline{\R(e_i)}$.    
\end{remark}
Now we examine the power maps which are induced by elements of $F_1$ and given by $\theta_M: G\longrightarrow G$, given by $\theta_M(x)=x^M$.
These maps are interesting in their own right; they produce one of the examples of words $\omega$ such that a finite non-abelian simple group $G$ may have $\omega(G)^2\neq G$.
This is why the results of \cite{LST11} are optimal.
These maps have been studied for several groups, notably in a series of papers for several finite groups of Lie type, see \cite{KunduSingh22}, \cite{PanjaSinghSympOrth22} and \cite{PanjaSinghUnitary23}. 
They have interesting applications as well, for example, see \cite{panja2023roots}, \cite{panja2024fibers}.
Let $p$ be a prime. 
Consider the group $\GL_2(q)$, where $q=p^r$ for some $r>0$.
Also, assume $M$ is a positive integer such that $p|M$ and $(q^2-1,M)=1$.
We wish to find $\mu(x^M,\GL_2(q))$ in this case. 
It is well-known that the conjugacy classes of $\GL_2(q)$ are given by the representatives
\begin{align*}
    \left\{\begin{pmatrix}
        \lambda&0\\0&\lambda
    \end{pmatrix}:\lambda\in \mathbb{F}_{q}^\times\right\},&
    \left\{\begin{pmatrix}
        \lambda&0\\0&\mu
    \end{pmatrix}:\lambda,\mu\in \mathbb{F}_{q}^\times,\lambda\neq \mu\right\},,\\
    \left\{\begin{pmatrix}
        \lambda&1\\0&\lambda
    \end{pmatrix}:\lambda\in \mathbb{F}_{q}^\times\right\},&
    \left\{\begin{pmatrix}
        0&\beta\\1&-\alpha
    \end{pmatrix}:\substack{\alpha,\beta\in \mathbb{F}_{q},\\t^2-t\alpha+\beta\text{ is }\\\text{irreducible in }\mathbb{F}_{q}[t]}\right\}.
\end{align*}
While considering the map $\theta_M:\GL_2(q)\longrightarrow\GL_2(q)$ given by $\theta_M(g)=g^M$, we get that the conjugacy class of elements of the form $\begin{pmatrix}
    \lambda & 1\\
    0 & \lambda
\end{pmatrix}$ do not survive.
Hence we get that 
\begin{align*}
    |\theta_M\left(\GL_2(q)\right)|=(q^2-1)(q^2-q)- (q-1)(q^2-1),
\end{align*}
which in turn implies that $\mu(x^M,\GL_2(q))=1-\dfrac{1}{q}$.
Hence combined with the previous result we get that 
if $M$ is a power of a prime $p$, both $1$ and $0$ are limit points of $\R(\theta_M)$ in $[0,1]$ where $\omega=x^M$. 
We claim that this is the case for any $M$, whenever $6\nmid M$, in particular for odd $M$ or even $M$ with $3\nmid M$.

Assume $6\nmid M$ and $M=p_1^{t_1}p_2^{t_2}\ldots p_m^{t_m}$ be a prime factorization of $M$ into distinct primes satisfying $p_1<p_2<\ldots<p_m$.
Then $p_1^2\not\equiv 1\pmod{p_i}$ for all $2\leq i\leq m$. Let $b_i>1$ be the least positive integer satisfying
\begin{align*}
    p_1^{2b_i}\equiv 1\pmod{p_i},
\end{align*}
for all $2\leq i\leq m$. Hence $p_i|p_1^{2t}-1$ for all $2\leq i\leq m$ if and only if $t$ is divisible by the $b=\mathrm{l.c.m.}(b_2,b_3,\ldots,b_m)>1$. 
This implies that there exist infinitely many $r$, say $r_1,r_2,\ldots$ such that $(p_1^{2r_s}-1,M)=1$. 
We use these $r_i$'s and note that $\mu(x^M,\GL_2(p_1^{r_i}))=1-\dfrac{1}{p_1^{r_i}}$ which tends to $1$ as $i\rightarrow\infty$. Thus we have proved
that if $M\geq 2$ is an integer such that $6\nmid M$, then $0,1\in\overline{\R(x^M)}$. We prove a more general statement below;
\begin{theorem}\label{thm:main-group}
    Let $M=2^a$ for an integer $a\geq 1$. Then $\overline{\R(x^M)}=[0,1]$.
\end{theorem}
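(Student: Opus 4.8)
The plan is to realise a convenient countable family of ratios and then amplify it using the multiplicative behaviour of power maps under direct products. The starting observation is that the hypotheses under which the $\GL_2(q)$ class computation above was carried out, namely $p\mid M$ together with $(q^2-1,M)=1$, are satisfied for every $q$ of the form $2^r$: here $p=2\mid M=2^a$, while $q^2-1=2^{2r}-1$ is odd and hence automatically coprime to $M$. I would therefore begin by recording, for each $r\ge 1$,
\[
\mu(x^M,\GL_2(2^r))=1-\frac{1}{2^r},
\]
so that $\R(x^M)$ contains values lying in $(0,1)$ and converging to $1$ from below. This is precisely the point at which the restriction $M=2^a$, rather than an arbitrary even $M$, is used, since it forces $q^2-1$ to be odd for \emph{every} $r$.

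The second ingredient is that $\R(x^M)$ is closed under multiplication. Indeed, for any finite groups $G_1,G_2$ one has $(g_1,g_2)^M=(g_1^M,g_2^M)$, whence $\theta_M(G_1\times G_2)=\theta_M(G_1)\times\theta_M(G_2)$ and therefore $\mu(x^M,G_1\times G_2)=\mu(x^M,G_1)\,\mu(x^M,G_2)$. Taking direct products of copies of the groups $\GL_2(2^r)$ then shows that every finite product of numbers of the form $1-2^{-r}$ lies in $\R(x^M)$; equivalently, the multiplicative submonoid of $(0,1]$ generated by $\{1-2^{-r}:r\ge 1\}$ is contained in $\R(x^M)$.

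Finally I would pass to logarithms. The set $T=\{-\log s:s\in\R(x^M)\cap(0,1]\}$ is, by the previous paragraph, a submonoid of $([0,\infty),+)$, and it contains the elements $-\log(1-2^{-r})$, which tend to $0^{+}$ as $r\to\infty$. An elementary lemma then finishes the argument: an additive submonoid of $[0,\infty)$ possessing positive elements arbitrarily close to $0$ is dense in $[0,\infty)$, since given $x\ge 0$ and $\varepsilon>0$ one may choose $t\in T$ with $0<t<\varepsilon$ and approximate $x$ by the nearest multiple $nt\in T$. Hence $\overline{T}=[0,\infty)$, i.e. $\R(x^M)$ is dense in $(0,1]$; as the closure is closed it then contains $\overline{(0,1]}=[0,1]$, while the reverse inclusion $\R(x^M)\subseteq(0,1]$ is automatic, giving $\overline{\R(x^M)}=[0,1]$.

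There is no computationally heavy step here: the class count yielding $1-1/q$ is already established, the multiplicativity is formal, and the density lemma is a one-line exercise. The only points genuinely demanding attention lie at the interfaces, namely that the $\GL_2(q)$ computation may legitimately be invoked for the chosen $q=2^r$ (this is where $M=2^a$ enters, through the oddness of $2^{2r}-1$), and that the values $1-2^{-r}$ accumulate at $1$ so that their logarithms accumulate at $0$. I expect the former to be the only subtlety worth spelling out; everything downstream is formal.
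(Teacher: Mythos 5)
Your proof is correct, and it shares the paper's two foundational ingredients --- the values $\mu(x^M,\GL_2(2^r))=1-2^{-r}$ (which you legitimately invoke, correctly noting that $2^{2r}-1$ is odd and hence coprime to $M=2^a$) and the multiplicativity $\mu(x^M,G_1\times G_2)=\mu(x^M,G_1)\,\mu(x^M,G_2)$ --- but your finishing step is genuinely different from the paper's. The paper approximates a given $c\in(0,1)$ head-on: it rescales $c$ into $[1/2,1)$ by a power of $2$ (realized via the groups $(\mathbb{Z}/2M\mathbb{Z})^j$), then runs a greedy interval-partition iteration producing sequences $\{n_i\},\{u_i\},\{v_i\}$ with $v_1v_2\cdots v_{i-1}/2^m\to c$, where each $v_i$ has the form $1-2^{-n}$; the monotonicity and convergence claims of that iteration are only sketched (``it can be shown that\dots''). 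You instead pass to logarithms and apply the clean lemma that an additive submonoid of $[0,\infty)$ containing positive elements arbitrarily close to $0$ is dense in $[0,\infty)$, the elements $-\log(1-2^{-r})\to 0^{+}$ supplying the required small generators. Your route buys two things: it replaces the paper's delicate bespoke iteration with a one-line density argument, and it renders the auxiliary cyclic groups unnecessary, since $1/2=1-2^{-1}$ is already among your $\GL_2$ values (so the multiplicative monoid they generate already accumulates at $0$, taking care of the endpoint $0$). The paper's approach is marginally more constructive --- it exhibits an explicit sequence of groups whose ratios converge to any prescribed $c$ --- but yours is shorter, fully rigorous as written, and more robust, since it would apply verbatim to any word whose ratio set is a multiplicative submonoid with values accumulating at $1$ from below.
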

\begin{proof}
For two finite groups $H$ and $K$ and any word map $\omega$ we have $$\mu(\omega, H\times K)=\mu(\omega, H)\times \mu(\omega, K).$$ 
From previous discussion, we know that there are groups $H_i$ such that $\mu(x^M,H_i)=1-\dfrac{1}{2^{i}}$. 
Also considering the groups $H_j=(\mathbb{Z}/2M\mathbb{Z})^j$, the $j$-fold direct product of the cyclic groups, for all integer $j\geq 1$ we get that $\mu(x^M,H_j)=\dfrac{1}{2^j}$. 
The rest of the proof relies on the following partitions of intervals;
\begin{align*}
    (0,1)=\bigcup\limits_{t\geq 0}\left[\dfrac{1}{2^{t+1}},\dfrac{1}{2^{t}}\right),\,
    \left[\dfrac{1}{2},1\right)=\bigcup\limits_{t\geq 1}\left[1-\dfrac{1}{2^t}, 1-\dfrac{1}{2^{t+1}}\right).
\end{align*}
Fix an element $c\in (0,1)$. 
Then there exists $m\geq 1$ such that $\dfrac{1}{2}\leq d=2^mc<1$. 
Hence there exists $n_1$ such that
\begin{align*}
    u_1=1-\dfrac{1}{2^{n_1}}\leq d< 1-\dfrac{1}{2^{n_1+1}}=v_1,
\end{align*}
which implies 
\begin{align*}
    \dfrac{1}{2}\leq \dfrac{u_1}{v_1}\leq \dfrac{d}{v_1}<1.
\end{align*}
Then there exists $n_2$ such that $u_2=1-\dfrac{1}{2^{n_2}}\leq \dfrac{d}{v_1}<1-\dfrac{1}{2^{n_2+1}}=v_2$ and so on. 
This process produces three sequences; (1) a sequence $\{n_i\}$ of natural numbers, (2) and two sequences $\{u_i\}$ and $\{v_i\}$ of rationals.
It can be shown that (1) the sequence $\{n_i\}$ is non-decreasing and (2) the sequence $\{u_i\}$ is non-decreasing and has infinitely many distinct terms.
Since the sequence $\{u_i\}$ is bounded above by $1$ it follows that it converses to $1$ and hence
\begin{align*}
    \lim\limits_{i\longrightarrow\infty}\dfrac{d}{v_1v_2\ldots v_{i-1}}=1,
\end{align*}
and thus $\dfrac{v_1v_2\ldots v_{i-1}}{2^m} \longrightarrow c$ as $i\longrightarrow\infty$. 
It is clear that using the groups $G_i$ and $H_j$ each of the term
\begin{align*}
    \dfrac{1}{2^{m_0}}\left(1-\dfrac{1}{2^{m_1}}\right)\ldots \left(1-\dfrac{1}{2^{m_k}}\right)
\end{align*}
are achievable for $m_j\geq 1$ and $k\geq 0$. This finishes the proof.
\end{proof}

\section{Image ratios of power maps in finite rings}\label{sec:ratio-ring}
A similar treatment applies to polynomial maps as well and hence we keep this section very short by just producing one map with a dense set of image ratios.
We will show that the squaring map corresponding to the polynomial $\omega=x^t\in \mathbb{F}_q[t]$ satisfies $\overline{\R(\omega)}=[0,1]$, when the category is taken to be of finite algebras over a finite field of cardinality $2$.
We work with the full matrix ring $\M_2(2^r)$. 
From the theory of Jordan canonical form it follows that the conjugacy classes of this ring fall into the following classes;
\begin{align*}
    \left\{\begin{pmatrix}
        \lambda&0\\0&\lambda
    \end{pmatrix}:\lambda\in \mathbb{F}_{2^r}\right\},&
    \left\{\begin{pmatrix}
        \lambda&0\\0&\mu
    \end{pmatrix}:\lambda,\mu\in \mathbb{F}_{2^r},\lambda\neq \mu\right\},,\\
    \left\{\begin{pmatrix}
        \lambda&1\\0&\lambda
    \end{pmatrix}:\lambda\in \mathbb{F}_{2^r}\right\},&
    \left\{\begin{pmatrix}
        0&\beta\\1&-\alpha
    \end{pmatrix}:\substack{\alpha,\beta\in \mathbb{F}_{2^r},\\t^2-t\alpha+\beta\text{ is }\\\text{irreducible in }\mathbb{F}_{2^r}[t]}\right\}.
\end{align*}
Considering the squaring map on $\M_2(2^r)$, we note that all but the elements of the form $\begin{pmatrix}
    \lambda & 1\\0&\lambda 
\end{pmatrix}$ appear in the image. Hence considering the conjugacy class sizes, we get that
\begin{align*}
    \mu(x^2,\M_2(2^r))=1-\dfrac{1}{2^{r}}.
\end{align*}
Since $\mu(x^2,(\mathbb{Z}/4\mathbb{Z})^n)=\dfrac{1}{2^n}$, it follows from the arguments of \cref{sec:finite-groups} that $\overline{\R(\omega)}=[0,1]$ and we tally that in the following theorem at this end.
\begin{theorem}\label{thm:main-ring}
    Let $M=2^a$ for some integer $a\geq 2$. When considered $x^M$ as a polynomial map for the category of $\mathbb{F}_{2}$-algebras, we have $\overline{\R(x^M)}=[0,1]$.
\end{theorem}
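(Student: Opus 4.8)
The plan is to follow the strategy behind \cref{thm:main-group}, substituting suitable finite $\mathbb{F}_2$-algebras for the groups used there and isolating the two limiting behaviours that the density argument actually needs. First I would record the multiplicativity of the image ratio: since the polynomial map $x^M$ acts coordinatewise on a direct product, $x^M(A\times B)=x^M(A)\times x^M(B)$, so $\mu(x^M,A\times B)=\mu(x^M,A)\,\mu(x^M,B)$. Thus $\R(x^M)$ is a sub-monoid of $((0,1],\times)$ containing $1$ (realised by $A=\mathbb{F}_2$), and every ratio is positive because $0=0^M$ always lies in the image.

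The crux is to compute $\mu(x^M,\M_2(\mathbb{F}_{2^r}))$, extending the squaring computation preceding the theorem from $x^2$ to $x^{2^a}$. As the map commutes with conjugation, its image is a union of similarity classes, so it suffices to decide which of the four families of classes survive. The key point is that in characteristic $2$ a non-scalar matrix with a repeated eigenvalue, namely $\lambda I+N$ with $N=\begin{psmallmatrix}0&1\\0&0\end{psmallmatrix}$, satisfies $(\lambda I+N)^2=\lambda^2 I$, hence $(\lambda I+N)^{2^a}=\lambda^{2^a}I$ is scalar; the Jordan classes therefore collapse onto scalars and are themselves absent from the image. Every remaining class is hit, using that the Frobenius $x\mapsto x^{2^a}$ is a bijection of $\mathbb{F}_{2^r}$ and of $\mathbb{F}_{2^{2r}}$: a scalar $\nu I$, a split class $\diag(\nu_1,\nu_2)$ with $\nu_1\neq\nu_2$, and an elliptic class with Frobenius-conjugate eigenvalues $\theta,\theta^{2^r}$ are the $2^a$-th powers of the matrices whose eigenvalues are the corresponding unique $2^a$-th roots, injectivity of the root keeping the eigenvalues distinct in the split case and outside $\mathbb{F}_{2^r}$ in the elliptic case. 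Counting the excluded classes---one class of size $2^{2r}-1$ for each $\lambda\in\mathbb{F}_{2^r}$---then yields $\mu(x^M,\M_2(\mathbb{F}_{2^r}))=1-\tfrac{1}{2^r}+\tfrac{1}{2^{3r}}$, a value strictly below $1$ that tends to $1$ as $r\to\infty$; only this last limiting behaviour will be used, so the precise constant is immaterial.

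For ratios tending to $0$ I would use the dual numbers $\mathbb{F}_2[x]/(x^2)$, where $x^2=0$ forces $(c_0+c_1x)^{2^a}=c_0^{2^a}=c_0$, so the image is $\{0,1\}$ and $\mu(x^M,\mathbb{F}_2[x]/(x^2))=\tfrac12$; by multiplicativity $\mu(x^M,(\mathbb{F}_2[x]/(x^2))^{j})=2^{-j}\to 0$. I would then conclude by a logarithmic monoid argument. Setting $L:=\{-\log_2\mu:\mu\in\R(x^M)\}$ turns $\R(x^M)$ into an additive sub-monoid of $[0,\infty)$; the numbers $-\log_2\!\big(1-2^{-r}+2^{-3r}\big)$ are positive and tend to $0$, so $L$ contains arbitrarily small positive elements and is therefore dense in $[0,\infty)$ (given a target $t$ and a small step $\delta\in L$, the multiple $\lfloor t/\delta\rfloor\,\delta\in L$ lies within $\delta$ of $t$). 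Applying the continuous map $t\mapsto 2^{-t}$ shows $\R(x^M)$ is dense in $(0,1]$, and since $2^{-j}\in\R(x^M)$ drives $0$ into the closure, we obtain $\overline{\R(x^M)}=[0,1]$.

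I expect the only real obstacle to be the similarity-class analysis on $\M_2(\mathbb{F}_{2^r})$: one must treat the scalar, split, Jordan and elliptic classes separately and, in the elliptic case, track the $2^a$-th roots through the quadratic extension $\mathbb{F}_{2^{2r}}/\mathbb{F}_{2^r}$. Everything else---multiplicativity, the dual-number computation, and the monoid density argument---is routine.
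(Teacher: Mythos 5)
Your proposal is correct, and it diverges from the paper in ways worth recording, since the paper in fact states \cref{thm:main-ring} without a formal proof: it computes $\mu(x^2,\M_2(2^r))$, notes $\mu(x^2,(\mathbb{Z}/4\mathbb{Z})^n)=2^{-n}$, and defers the density step to ``the arguments of'' \cref{sec:finite-groups}. Your skeleton is the same (multiplicativity over direct products, matrix algebras for ratios tending to $1$, a small algebra for ratios tending to $0$), but three details differ, each in your favor. First, replacing $\mathbb{Z}/4\mathbb{Z}$ by the dual numbers $\mathbb{F}_2[x]/(x^2)$ is a genuine repair, not a cosmetic change: $\mathbb{Z}/4\mathbb{Z}$ has characteristic $4$, hence is not an $\mathbb{F}_2$-algebra at all, so the paper's witness for small ratios lies outside the category the theorem is about, while yours lies inside it. Second, your count on $\M_2(\mathbb{F}_{2^r})$ is the accurate one: in the full matrix ring there are $2^r$ excluded Jordan classes (including $\lambda=0$), each of size $2^{2r}-1$, out of $2^{4r}$ elements, giving $1-2^{-r}+2^{-3r}$; the paper's stated value $1-2^{-r}$ silently reuses the $\GL_2(q)$ computation, where $\lambda\neq 0$ and the denominator is $|\GL_2(q)|$ rather than $q^4$ --- harmless for the limit, as you observe, but your constant is the correct one, and you also verify surjectivity onto the surviving scalar, split and elliptic classes for every $M=2^a$ rather than only for squaring, which is what the theorem (stated for $a\geq 2$) actually requires. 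Third, your density argument via the additive sub-monoid $L=\{-\log_2\mu:\mu\in\R(x^M)\}\subseteq[0,\infty)$, dense because it contains arbitrarily small positive elements, is cleaner and more self-contained than the paper's iterative approximation by partial products $\left(1-\tfrac{1}{2^{n_1}}\right)\left(1-\tfrac{1}{2^{n_2}}\right)\cdots$, which leans on unproved ``it can be shown'' steps; the two arguments reach the same conclusion, but yours is immediately rigorous and transfers verbatim to any situation where one has ratios accumulating at $1$ from below together with a sequence of ratios tending to $0$.
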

\printbibliography
\end{document}